\documentclass[11pt,letterpaper,reqno]{amsart}

\usepackage{tikz}
\usetikzlibrary{positioning,shapes.geometric,arrows.meta}
\usepackage{amssymb}
\usepackage{amsmath}
\usepackage{amsthm}
\usepackage{amsfonts}
\usepackage{bbm}
\usepackage{enumitem}
\usepackage{pgfplots}
\pgfplotsset{compat=1.18}
\usepackage{booktabs}
\usepackage{graphicx}
\usepackage[T1]{fontenc}
\usepackage{doi}
\usepackage{float}
\usepackage[expansion=false]{microtype}
\usepackage{tabularx}
\usepackage{array}
\usepackage{xcolor}
\usepackage[numbers]{natbib}

\usepackage{hyperref}
\usepackage{bookmark}
\hypersetup{
  pdfstartview={FitH},
  colorlinks=true,
  linkcolor=blue,
  citecolor=blue,
  urlcolor=blue
}

\newtheorem{thm}{Theorem}[section]
\newtheorem{lem}[thm]{Lemma}
\newtheorem{prop}[thm]{Proposition}

\newtheorem{ques}[thm]{Question}

\theoremstyle{definition}

\newtheorem{rem}[thm]{Remark}

\numberwithin{equation}{section}

\newcommand{\CC}{\mathbb{C}}
\newcommand{\HH}{\mathbb{H}}
\newcommand{\RR}{\mathbb{R}}
\newcommand{\ZZ}{\mathbb{Z}}
\newcommand{\wideC}{\widehat{\CC}}
\newcommand{\dd}{\,\mathrm{d}}
\newcommand{\PSL}{\operatorname{PSL}}

\begin{document}

\title[A half-plane counterexample]
{A Counterexample to Nevanlinna's Century-Old Half-Plane Problem}

\author[Y.~He]{Yixin He}
\address{School of Mathematical Sciences, Fudan University, Shanghai 200433, P.~R.~China}
\email{yixin.he717@gmail.com}

\author[T.~Zhang]{Teng Zhang}

\address{School of Mathematics and Statistics, Xi'an Jiaotong University, Xi'an 710049, P.~R.~China}
\email{teng.zhang@stu.xjtu.edu.cn}

\subjclass[2020]{Primary 30D30; Secondary 30D35, 30C35, 30F20}

\keywords{Nevanlinna class; bounded type; modular lambda function; Farey tessellation; spherical derivative; Green function}

\begin{abstract} Let $\HH=\{z\in\CC:\operatorname{Im}z>0\}$, and let $N(\HH)$ denote
	the Nevanlinna class in $\HH$. 
We construct a nonconstant meromorphic function $F$ on $\CC$ such that $F^{-1}(\{0,1,\infty\})\subset\RR$ and $F|_{\HH}\notin N(\HH)$. Thus, omitting three distinct spherical values in a half-plane does not force a meromorphic function on $\CC$ to be of bounded type there. This provides a counterexample to Nevanlinna's century-old half-plane problem.
\end{abstract}

\maketitle

\section{Introduction}

Let $\HH=\{z\in\CC:\operatorname{Im}z>0\}$.  The Nevanlinna class $N(\HH)$ consists of the meromorphic functions in $\HH$ representable as quotients of two bounded analytic functions.  Equivalently, for an analytic function $g$, membership in $N(\HH)$ is characterized by the existence of a harmonic majorant of $\log^+|g|$; see \cite[Chapter~II, Section~5, pp.~66--71]{Gar07}.  

In this paper, we consider the following long-standing question, which we refer to as \emph{Nevanlinna's Century-Old Half-Plane Problem}. It goes back to Nevanlinna's 1925 work \cite{Nev25}; see also \cite[Question~2]{EKS26}:
\begin{ques}[Nevanlinna]\label{ques:nevanlinna}
Suppose that a meromorphic function on $\CC$ omits three distinct values of $\wideC$ in a half-plane.  Must its restriction to that half-plane belong to the Nevanlinna class?
\end{ques}

Question~\ref{ques:nevanlinna} has a long history in value
distribution theory and the theory of meromorphic functions, which may be
summarized chronologically as follows:

\begin{itemize}
	
	\item
	In 1925, Nevanlinna \cite{Nev25} proved that if $F$ is meromorphic in
	$\CC$, has finite order, and omits three distinct values of $\wideC$ in a
	half-plane, then its restriction to that half-plane is of bounded type.
	Thus, Question~\ref{ques:nevanlinna} has an affirmative answer under the
	additional assumption that $F$ has finite order.
	
\item
In 1955, Edrei \cite{Edr55} proved, as a
special case of his theorem on radially distributed values, that if all
zeros, one-points, and poles of a meromorphic function lie on the real
axis and $
\delta(0,F)+\delta(1,F)+\delta(\infty,F)>0,$
then the order of $F$ is at most $1$.  Combining this result with
Nevanlinna's finite-order theorem
\cite{Nev25}, one obtains
that $F$ is of bounded type in both the upper and lower half-planes.
Thus Edrei's theorem gives an affirmative answer in this special case
only under the additional positive-deficiency hypothesis.
	
	\item
	In 1961, Ostrovskii \cite{Ost61} substantially weakened the finite-order
	assumption. He showed that the same conclusion holds provided
	$\int_{1}^{\infty}\log^{+}T(r,F)\,r^{-2}\,dr<\infty$, where $T(r,F)$
	denotes the Nevanlinna characteristic of $F$. Since every finite-order
	meromorphic function satisfies this condition, Ostrovskii's theorem
	strictly extends Nevanlinna's result.
	
\item
In 1975, Gol'dberg \cite{Gol75} showed that, for general meromorphic
functions in a half-plane, an analogue of Nevanlinna's lemma on the
logarithmic derivative fails. More precisely, he constructed examples
showing that the logarithmic-derivative estimates used in the classical
approach cannot hold without additional assumptions. This obstructs a
direct extension of the method used by Nevanlinna and Ostrovskii to the
unrestricted case.
	
\item
In 2026, Eremenko, Kulikov, and Sodin \cite{EKS26} observed that for
functions defined only in a half-plane, the corresponding statement is
false: the modular function $\lambda$ is meromorphic in $\mathbb H$, omits
the three values $0$, $1$, and $\infty$, but is not of bounded type there.
They further studied a quantitative version of the problem for curved
half-planes $\mathbb H(-m)=\{x+iy:y>-m(x)\}$, where $m$ is even,
positive, continuous, and nonincreasing on $[0,\infty)$.  They proved
\cite[Theorem~1]{EKS26} that if $F$ is
meromorphic in $\mathbb H(-m)$, omits three distinct values, and
$\int_{1}^{\infty}\log^{-}m(t)\,t^{-2}\,dt<\infty$, then
$F|_{\mathbb H}$ is of bounded type. Moreover, this condition is sharp:
if $\int_{1}^{\infty}\log^{-}m(t)\,t^{-2}\,dt=\infty$, then there exists
a meromorphic function in $\mathbb H(-m)$ omitting three distinct values
whose restriction to $\mathbb H$ is not of bounded type.

\end{itemize}

%Recently, Eremenko, Kulikov, and Sodin \cite[Theorem~1]{EKS26} established a sharp logarithmic-integral dichotomy for meromorphic functions omitting three values in certain variable enlargements of the upper half-plane.

The purpose of this paper is to give a negative answer to Question~\ref{ques:nevanlinna}.

\begin{thm}\label{thm:main}
There exists a nonconstant meromorphic function $F\colon\CC\to\wideC$ such that $F(z)\notin\{0,1,\infty\}$ for every $z\in\HH$, but $F|_{\HH}\notin N(\HH)$. In fact, $F^{-1}\{0,1,\infty\}\subset\RR$.
\end{thm}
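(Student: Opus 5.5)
The construction will be $F=\lambda\circ\phi$, where $\lambda$ is the modular lambda function on $\HH$ and $\phi\colon\CC\to\wideC$ is meromorphic with $\phi(\HH)\subset\HH$. Then $F(\HH)\subset\lambda(\HH)=\CC\setminus\{0,1\}$, so $F$ omits $0,1,\infty$ in $\HH$. The difficulty is that $F$ must extend meromorphically to all of $\CC$, and $\lambda$ has the whole real line as its natural boundary. The keywords (Farey tessellation, spherical derivative, Green function) point to a different route: construct $F$ directly as a meromorphic function, without composing with a globally defined map.

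On $\HH$ I take $F=\lambda\circ\psi$, where $\psi$ maps $\HH$ onto the upper half-plane. On the lower half-plane I take the reflected function. The gluing is done along the real axis so that real segments map to the edges of the Farey tessellation, in the spirit of the Schwarz reflection principle. Concretely, $\lambda$ maps each Farey ideal triangle conformally onto a half-plane, with its edges going to real intervals between $0$, $1$ and $\infty$. I would build a locally finite "line complex" (Speiser graph) for a meromorphic function on $\CC$, as follows.
\begin{itemize}
\item In the upper half-plane, use the cells of a quasiconformal copy of the Farey tessellation, accumulating only at $\RR\cup\{\infty\}$.
\item In the lower half-plane, use a tessellation with enough branching that the resulting surface is parabolic. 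This forces all preimages of $\{0,1,\infty\}$ onto $\RR$.
\end{itemize}
The realization then goes in three steps:
\begin{enumerate}
\item Glue half-sheets of $\lambda$ into a quasiregular map $G\colon\CC\to\wideC$.
\item Apply the measurable Riemann mapping theorem, obtaining $G=F\circ h$ with $h$ quasiconformal and $F$ meromorphic. Parabolicity holds because the domain is the plane.
\item Keep the dilatation supported where the Teichmüller--Wittich--Belinskii condition or an explicit estimate gives $h$ close to affine. This lets us assume $h$ maps $\HH$ to $\HH$ and $\RR$ to $\RR$, by symmetrizing with respect to complex conjugation so that $F(\bar z)=\overline{F(z)}$.
\end{enumerate}
Symmetry then gives $F^{-1}\{0,1,\infty\}\subset\RR$ automatically, because in $\HH$ the function $F$ is a composition of $\lambda$ with a conformal map.

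For the failure of bounded type I use the harmonic-majorant criterion. Suppose $\log^+|F|$, or equivalently the lifted quantity, had a harmonic majorant on $\HH$. Then $F|_{\HH}=\lambda\circ\psi$ with $\psi\colon\HH\to\HH$ conformal onto its image. The Eremenko--Kulikov--Sodin observation is that $\lambda$ is not of bounded type because $\log|\lambda|$ grows like $\pi/\operatorname{Im}\tau$ near the cusps at the rationals, on a set of full measure of approach. I would show that $\psi$ pulls this back so that the Poisson integral of $\log^+|F(x+iy)|$ diverges as $y\to0$. Equivalently, the Green-function estimate $\int\log^+|F(x+iy)|\,dx/(1+x^2)\to\infty$ must hold. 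This follows if $\psi$ extends to the real axis with $\psi(x)\in\RR$ and has bounded distortion on a set of positive measure. Then Beurling-type estimates transfer the non-integrability of $1/\operatorname{Im}\tau$ over the Farey horoballs to $\HH$.

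The main obstacle I expect is the gluing step. One must arrange that the boundary behaviour of $\psi$ on $\RR$ (through the quasiconformal correction $h$) stays regular enough that the divergence of the harmonic majorant survives, while the lower half-plane is still filled by a meromorphic, and not merely quasiregular, continuation. I would first try to make $h$ conformal on $\HH$ outright by placing all dilatation in the lower half-plane. The remaining task is then only to control $h|_{\RR}$ quasisymmetrically, which is enough because quasisymmetric distortion preserves divergence of the relevant logarithmic integrals up to constants.
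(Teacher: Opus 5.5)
\textbf{There is a genuine gap.} The property you rely on for unbounded type is exactly what meromorphy on $\CC$ forbids.

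Suppose $F$ is meromorphic on $\CC$ and $F|_{\HH}=\lambda\circ\psi$. Take any $x_0\in\RR$ with $F(x_0)\notin\{0,1,\infty\}$. On a small half-disk at $x_0$, $\psi$ is a single branch of $\lambda^{-1}\circ F$ over a small disk avoiding $0,1,\infty$. So $\psi$ extends continuously to $x_0$ with a value in $\HH$. Hence $\psi$ can reach $\RR$ only on the discrete set $F^{-1}\{0,1,\infty\}\cap\RR$. Equivalently, $F^{-1}(\widehat{\RR})$, which in $\HH$ is the pull-back of the Farey edges, is locally finite in $\CC$; the cells cannot accumulate at $\RR\cup\{\infty\}$ as in your first bullet.

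Your criterion ``$\psi(x)\in\RR$ with bounded distortion on a set of positive measure'' therefore never holds for the function you are trying to build. (If $\psi$ were onto $\HH$, as you literally write, it would be M\"obius and $\RR$ would be a natural boundary.) Nothing else in the outline produces divergence of $\int\log^+|F(x+iy)|\,dx/(1+x^2)$.

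The quasiconformal steps are also inconsistent:
\begin{itemize}
\item A Beltrami coefficient with $\mu(\bar z)=\overline{\mu(z)}$ that vanishes on $\HH$ vanishes identically. So you cannot both symmetrize and put all dilatation in the lower half-plane.
\item Parabolicity has nothing to do with where the $0,1,\infty$-points lie. That is automatic from $F(\HH)\subset\CC\setminus\{0,1\}$ and symmetry.
\item Quasisymmetric boundary maps can send sets of positive measure to null sets, so the final transfer claim is unsupported.
\end{itemize}

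\textbf{What is missing.} Your frame ($F|_{\HH}=\lambda\circ\psi$, reflection, harmonic-majorant criterion) is the right one. What you lack is a way to get unbounded type while every compact piece of the boundary of $\psi(\HH)$ stays at finite Farey depth, away from $\RR$.

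The paper takes $\psi=W$, a Riemann map of $\HH$ onto a domain $\Omega\subset\HH$ bounded by a locally finite chain of Farey edges, normalized by $W(i)=i$ and $W(\infty)=\infty$. Since $\lambda$ is real on open Farey edges and each vertex is a finite-width cusp, Schwarz reflection makes $f=\lambda\circ W$ extend meromorphically to $\CC$ with no surgery at all.

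Unbounded type then comes from the Green-energy obstruction rather than from boundary values. A harmonic majorant of $\log^+|f|$ forces $\int_{\HH}G_{\HH}(i,z)(f^{\#})^2\dd A<\infty$, by Riesz decomposition. By conformal invariance this integral equals $\int_\Omega G_\Omega(i,w)(\lambda^{\#})^2\dd A(w)$.

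That integral is made infinite diagonally. In infinitely many disjoint bays the chain is refined to a finite depth $m_n$, chosen so that the bay alone contributes more than $1$. Such $m_n$ exists for three reasons:
\begin{itemize}
\item The infinite-depth limit domain reaches a real interval on which $G\geq c_n y$, by the Hopf lemma.
\item $\int_0^{\varepsilon}\!\int_a^{a+2}y(\lambda^{\#})^2\dd x\dd y=\infty$, from the logarithmic-mean lower bound for $\lambda$.
\item Monotone convergence of Green functions under exhaustion therefore drives the finite-depth energies to infinity.
\end{itemize}
Domain monotonicity then lets the disjoint bay contributions add up in the final $\Omega$.
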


We now give a brief sketch of our construction in Theorem~\ref{thm:main}.

\medskip 
\noindent \textbf{Sketch of our construction.} The construction begins with the modular lambda function
\[
\lambda\colon\HH\longrightarrow\wideC\setminus\{0,1,\infty\}.
\]
Although $\lambda$ is not of bounded type in $\HH$, it cannot itself serve as the required example, because it has no meromorphic continuation through the whole real axis.  We instead construct a simply connected domain $\Omega\subset\HH$ whose boundary is a locally finite chain of Farey edges.  If $W\colon\HH\to\Omega$ is a suitably normalized Riemann map, then $\lambda\circ W$ has finite real boundary values away from a locally finite set.  At the exceptional points, finite-width modular cusps become zeros, one-points, or poles after reflection.  Consequently, $\lambda\circ W$ extends meromorphically to $\CC$.

The domain $\Omega$ is selected by a diagonal procedure.  In pairwise disjoint bays, we make finite Stern--Brocot refinements of individually chosen finite depths.  The limiting infinite refinement of one bay reaches a real interval.  Near that interval its Green function is bounded below by a positive multiple of the height, whereas the height-weighted spherical area of $\lambda$ is infinite.  Monotone convergence therefore permits a finite refinement whose local Green energy exceeds one.  The final domain contains each selected one-bay domain, so these disjoint contributions add to infinity.

%Throughout the paper, the Green function is normalized by
%\[
%G_D(p,z)>0,
%\qquad
%-\Delta_z G_D(p,z)=2\pi\delta_p,
%\]
%and the spherical derivative of a meromorphic function $g$ is
%\[
%g^{\#}(z)=\frac{|g'(z)|}{1+|g(z)|^2}.
%\]

\medskip 
\noindent \textbf{Notation.}
Throughout the paper, if $D\subset\CC$ is a Greenian domain and $p\in D$, we denote by $G_D(p,z)$ the Green function of $D$ with pole at $p$, normalized so that $G_D(p,z)>0$ for $z\in D\setminus\{p\}$ and
\[
-\Delta_z G_D(p,z)=2\pi\delta_p
\]
in the sense of distributions, with zero boundary values in the usual Green-function sense. Here $\Delta=\partial_x^2+\partial_y^2$ is the Euclidean Laplacian and $\delta_p$ denotes the Dirac point mass at $p$. Equivalently,
\[
G_D(p,z)=\log\frac{1}{|z-p|}+O(1),\qquad z\to p.
\]

For a meromorphic function $g$, its spherical derivative is defined by
\[
g^{\#}(z)=\frac{|g'(z)|}{1+|g(z)|^2}
\]
at points where $g(z)\neq\infty$, and by continuous extension at the poles of $g$. Equivalently, near a pole one may use the identity $g^{\#}=(1/g)^{\#}$.

\medskip
\noindent\textbf{Organization of the paper.}
In Section~\ref{sec:green-energy}, we establish the Green-energy obstruction
to membership in the Nevanlinna class.
In Section~\ref{sec:lambda}, we collect the required properties of the
modular lambda function and prove the logarithmic-mean estimate and the
resulting divergence of the height-weighted spherical area.
In Section~\ref{sec:farey}, we introduce the finite Farey refinements used
in the boundary construction.
In Section~\ref{sec:diagonal}, we carry out the Green-kernel diagonal
argument and construct a domain with infinite Green-weighted spherical area.
In Section~\ref{sec:reflection}, we prove that, after uniformization,
Schwarz reflection across the Farey boundary produces a meromorphic function
on the whole plane.
In Section~\ref{sec:main-proof}, we combine these ingredients to prove
Theorem~\ref{thm:main}.
Finally, in Section~\ref{sec:further-remarks}, we explain how the construction
fits with the classical results of Nevanlinna and Edrei and record several
consequences.

\medskip
\noindent\textbf{Acknowledgements.}  We thank Professors Eremenko, Kulikov, and Sodin for their helpful comments on this work. The second author is also  deeply grateful
to his advisor Professor Zongben Xu for his kind support and concern regarding both his academic work
and personal life. Teng Zhang is supported by the China Scholarship Council, the Young Elite Scientists Sponsorship Program for PhD Students of the China Association for Science and Technology, and the Fundamental Research Funds for the Central Universities at Xi'an Jiaotong University (Grant No.~xzy022024045). This counterexample was obtained through AI-assisted exploration under the authors' mathematical supervision and guidance. The authors take full responsibility for the mathematical arguments and the correctness of the final manuscript.

\section{Green energy and the Nevanlinna class}\label{sec:green-energy}

We first isolate the one direction of the Ahlfors--Shimizu criterion needed later.  We use the Riesz decomposition theorem in the normalization of \cite[p.~76, Theorem~3.7.9]{Ran95}.

\begin{prop}\label{prop:energy-obstruction}
Let $D$ be a Greenian plane domain, let $p\in D$, and let $g$ be holomorphic in $D$.  If $\log^+|g|$ has a harmonic majorant in $D$, then
\[
\int_D G_D(p,z)\bigl(g^{\#}(z)\bigr)^2\dd A(z)<\infty.
\]
Consequently, if the integral is infinite for $D=\HH$, then $g\notin N(\HH)$.
\end{prop}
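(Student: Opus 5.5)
The plan is to use the subharmonic function $u=\log(1+|g|^2)$, whose Laplacian is exactly a multiple of $(g^{\#})^2$. Since $g$ is holomorphic, $u$ is smooth and subharmonic in $D$, with
\[
\Delta u = 4\,\frac{|g'|^2}{(1+|g|^2)^2}=4\,(g^{\#})^2 .
\]
Moreover $u\le 2\log^+|g|+\log 2$. So if $h$ is a harmonic majorant of $\log^+|g|$, then $2h+\log 2$ is a harmonic majorant of $u$.

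A subharmonic function with a harmonic majorant on a Greenian domain has a least harmonic majorant $H$. The Riesz decomposition theorem in the normalization of \cite[Theorem~3.7.9]{Ran95} then gives
\[
u(z)=H(z)-\frac{1}{2\pi}\int_D G_D(z,w)\,\Delta u(w)\dd A(w).
\]
I would check this normalization against our convention $-\Delta G_D=2\pi\delta_p$: Ransford's $\Delta u$ is the Riesz measure, and the Green potential enters with the factor $1/(2\pi)$. Evaluating at $z=p$ and using the symmetry of $G_D$ yields
\[
\frac{2}{\pi}\int_D G_D(p,w)\,(g^{\#}(w))^2\dd A(w)=H(p)-u(p)<\infty,
\]
because $H(p)$ is finite and $u(p)\ge 0$. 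This gives the first assertion.

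For the consequence, take $D=\HH$, which is Greenian, and $g\in N(\HH)$. The paper's stated equivalence in the introduction gives a harmonic majorant of $\log^+|g|$ for analytic $g$, so the integral is finite, and the contrapositive is the claim. If $g$ is only meromorphic, I would write $g=a/b$ with bounded $a,b$. Then $\log(1+|g|^2)=\log(|a|^2+|b|^2)-\log|b|^2$. The first term is subharmonic with harmonic majorant $\log(\|a\|_\infty^2+\|b\|_\infty^2)$. The second term, $-\log|b|^2$, is superharmonic; its Riesz measure is a point-mass contribution at the poles, while away from the poles the Laplacian is still $4(g^{\#})^2$. Running the same Riesz argument on the subharmonic function $\log(|a|^2+|b|^2)$ bounds the desired integral, since $\Delta\log(|a|^2+|b|^2)\ge 4(g^{\#})^2|b|^{\dots}$.

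The main obstacle I expect is this meromorphic case. A cleaner route is to note that $\Delta\log(|a|^2+|b|^2)=4|ab'-a'b|^2/(|a|^2+|b|^2)^2$, and that this equals $4(g^{\#})^2$ identically wherever $b\ne0$. The Green potential of this smooth measure is finite at $p$ by Riesz decomposition, provided $p$ is not a common zero of $a$ and $b$, which can be arranged by cancelling common inner factors or by moving $p$. Moving $p$ is harmless, since the finiteness of $\int G_D(p,\cdot)\,d\mu$ does not depend on $p$, by Harnack's inequality applied away from the poles. In the holomorphic case stated, only the first argument is needed.
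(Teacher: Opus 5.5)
Your proof is correct and follows the paper's route. Both apply the Riesz decomposition to $\log(1+|g|^2)$ and evaluate at $p$. The paper uses half of this function and works on a smooth exhaustion $D_j\uparrow D$ with monotone convergence of $G_{D_j}$, whereas you invoke the global decomposition with the least harmonic majorant directly.

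For the consequence, the paper proves the needed implication ($g\in N(\HH)$ gives a harmonic majorant of $\log^+|g|$) rather than citing the equivalence. It divides $h_1,h_2$ by the Blaschke product of the zeros of $h_2$ and uses $-\log|\widetilde h_2|$. Your meromorphic digression is unnecessary, since $g$ is assumed holomorphic.
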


\begin{proof}
Set
\[
u(z)=\frac12\log\bigl(1+|g(z)|^2\bigr).
\]
Then
\[
0\leq u\leq\log^+|g|+\tfrac12\log 2,
\qquad
\Delta u=2\bigl(g^{\#}\bigr)^2.
\]
Thus $u$ has a finite harmonic majorant $H$.  Let $D_j\Subset D$ be a smooth exhaustion with $p\in D_1$.  The nonnegative function $H-u$ is superharmonic, and the Riesz decomposition theorem on $D_j$ gives
\[
\frac{1}{2\pi}\int_{D_j}G_{D_j}(p,z)\Delta u(z)\dd A(z)
   \leq H(p)-u(p).
\]
As $j\to\infty$, the Green kernels increase to $G_D$; hence monotone convergence yields
\[
\frac1\pi\int_D G_D(p,z)\bigl(g^{\#}(z)\bigr)^2\dd A(z)
   \leq H(p)-u(p)<\infty.
\]

%For the last assertion, suppose that $g=h_1/h_2$ with bounded analytic $h_1,h_2$ in $\HH$.  Since $g$ is holomorphic, the zeros of $h_2$ occur among those of $h_1$, with at least the same multiplicities.  Dividing both functions by the corresponding Blaschke product leaves a bounded zero-free denominator.  After a harmless common scaling, minus the logarithm of its modulus is a positive harmonic majorant of $\log^+|g|$.  This is also an immediate consequence of the canonical factorization in \cite[p.~71, Theorem~5.5]{Gar07}.  The first part of the proposition now proves the contrapositive.

For the last assertion, suppose that
$g=h_1/h_2$, where $h_1,h_2\in H^\infty(\HH)$ and
$h_2\not\equiv0$.  Let $B$ be the Blaschke product formed from the zeros
of $h_2$, counted with multiplicity.  Since $g$ is holomorphic, every zero
of $h_2$ is a zero of $h_1$ with at least the same multiplicity.  Hence
\[
\widetilde h_j=\frac{h_j}{B}\in H^\infty(\HH),
\qquad j=1,2,
\]
and $\widetilde h_2$ is zero-free.  After a common scaling, we may assume
that $
\|\widetilde h_1\|_\infty,
\|\widetilde h_2\|_\infty\leq1.$
It follows that
\[
\log^+|g|
=
\log^+\left|\frac{\widetilde h_1}{\widetilde h_2}\right|
\leq-\log|\widetilde h_2|.
\]
Since $\widetilde h_2$ is zero-free, the function
$-\log|\widetilde h_2|$ is a positive harmonic majorant of
$\log^+|g|$.  This also follows from the canonical factorization in
\cite[p.~71, Theorem~5.5]{Gar07}.  The first part of the proposition now
proves the contrapositive.
\end{proof}

We shall repeatedly use two standard properties of Green functions: domain monotonicity and convergence under exhaustion; see \cite[p.~108, Corollary~4.4.5 and Theorem~4.4.6]{Ran95}. If $D_1\subset D_2$ are Greenian domains and $p\in D_1$, then
\[
G_{D_1}(p,z)\leq G_{D_2}(p,z),
\qquad z\in D_1.
\]
Moreover, if $(D_m)$ is an increasing sequence of domains with $D_m\uparrow D$, where $D$ is Greenian and $p\in D_1$, then, for every $z\in D\setminus\{p\}$,
\begin{equation}\label{eq:green-exhaustion}
	G_{D_m}(p,z)\uparrow G_D(p,z)
\end{equation}
as $m\to\infty$, once $m$ is large enough that $z\in D_m$.

Whenever Green functions on varying subdomains are integrated over a fixed ambient domain, we extend them by zero outside their respective domains. With this convention, $G_{D_m}(p,\cdot)$ is pointwise increasing on $D\setminus\{p\}$ and converges there to $G_D(p,\cdot)$.

\section{The modular lambda function}\label{sec:lambda}

Put $q=e^{\pi i\tau}$ and define
\[
\vartheta_2(\tau)=\sum_{n\in\ZZ}e^{\pi i(n+1/2)^2\tau},
\qquad
\vartheta_3(\tau)=\sum_{n\in\ZZ}e^{\pi in^2\tau},
\qquad
\lambda(\tau)=\frac{\vartheta_2(\tau)^4}{\vartheta_3(\tau)^4}.
\]
This fixes our normalization.  The following standard facts are recorded in \cite[Chapter~7, Sections~3.4--3.5, pp.~278--282]{Ahl79} and, in the theta-function normalization used above, in \cite[Equations~23.15.6, 23.17.4, 23.18.1, and 23.18.3]{NIS26}:
\begin{enumerate}[label=\textup{(\roman*)},leftmargin=2.4em]
\item $\lambda$ is holomorphic in $\HH$, is $2$-periodic, and omits $0$, $1$, and $\infty$ there;
\item $\lambda$ is invariant under the principal congruence group $\Gamma(2)$;
\item for every $\gamma\in\PSL_2(\ZZ)$, there is an anharmonic transformation $R_\gamma$ satisfying $\lambda\circ\gamma=R_\gamma\circ\lambda$, where
\begin{equation}\label{eq:anharmonic}
R_\gamma\in\mathcal A:=
\left\{
t\mapsto t,\ t\mapsto\frac1t,\ t\mapsto1-t,\
t\mapsto\frac1{1-t},\ t\mapsto\frac{t}{t-1},\
t\mapsto\frac{t-1}{t}
\right\};
\end{equation}
\item uniformly modulo the period $2$,
\begin{equation}\label{eq:lambda-cusp}
\lambda(\tau)=16q+O(q^2),\qquad \operatorname{Im}\tau\to+\infty;
\end{equation}
\item $\lambda(iy)$ is real for $y>0$.
\end{enumerate}

The quantitative input for the construction is a lower bound for the logarithmic mean of $\lambda$.  We include the orbit-counting argument, following the idea of \cite[Lemma~6]{EKS26}, in order to make every normalization and multiplicity explicit.

\begin{lem}\label{lem:log-mean}
There exist constants $c>0$ and $y_0>0$ such that
\[
\int_0^2\log^+|\lambda(x+iy)|\dd x
   \geq c\log\frac1y,
\qquad 0<y<y_0.
\]
\end{lem}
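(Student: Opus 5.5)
Our approach is to compare the horizontal segment $\{x+iy:0\le x\le 2\}$ with the $\PSL_2(\ZZ)$-translates of the cusp neighbourhood at $\infty$. Where the segment passes through a horocycle tangent to the real axis at a rational point $a/c$, the modular transformation $\gamma$ sending that cusp to $\infty$, together with \eqref{eq:anharmonic} and \eqref{eq:lambda-cusp}, shows that $|\lambda|$ is very large or very small there. Where it is very large, $\log^+|\lambda|$ contributes an amount of order $\operatorname{Im}(\gamma\tau)$.

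Concretely, fix $\gamma=\begin{pmatrix}a&b\\c&d\end{pmatrix}\in\PSL_2(\ZZ)$ with $c\ge1$, so that $\gamma(-d/c)=\infty$ and
\[
\operatorname{Im}\gamma(\tau)=\frac{y}{|c\tau+d|^2}.
\]
On the segment $\tau=x+iy$ with $|x+d/c|\le y$, we have $|c\tau+d|^2\le 2c^2y^2$. Hence $\operatorname{Im}\gamma\tau\ge 1/(2c^2y)$, which is large when $c^2y$ is small.

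By (iii), $\lambda(\tau)=R^{-1}(\lambda(\gamma\tau))$ for some $R\in\mathcal A$, and by (iv) $\lambda(\gamma\tau)=16e^{\pi i\gamma\tau}(1+o(1))$ is tiny. Among the six anharmonic maps applied to a tiny $t$, two give $|\cdot|\sim 1/|t|$ (namely $1/t$ and $(t-1)/t$), and the others give values near $0$ or $1$. So on the cusps of the right type,
\[
\log^+|\lambda(\tau)|\ge \pi\operatorname{Im}\gamma\tau-O(1)\ge \frac{\pi}{2c^2y}-O(1).
\]
Which cusps have the right type is governed by the $\Gamma(2)$-class of $-d/c$, i.e.\ by $(d,c)\bmod 2$. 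By (iii) and the structure of $\Gamma(2)$, the cusps of $\Gamma(2)$ are $\infty$, $0$, $1$, with $\lambda\to0$, $1$, $\infty$ respectively. Thus $|\lambda|$ is large near rationals $p/q$ in lowest terms with $p,q$ both odd, the class of $1$. One can verify this directly with $\lambda(\tau+1)=\lambda/(\lambda-1)$ and $\lambda(-1/\tau)=1-\lambda$.

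Next we sum these contributions. For each reduced $p/q\in[0,2]$ with $p,q$ odd and $q\le Q:=\lfloor (\epsilon/y)^{1/2}\rfloor$, the window $|x-p/q|\le y$ has length $2y$ and contributes at least $2y\cdot(\pi/(2q^2y)-O(1))\ge c_1/q^2$, provided $\epsilon$ is small. These windows are disjoint once $y\ll 1/Q^2$, since distinct Farey fractions with denominators at most $Q$ are separated by at least $1/Q^2$; shrinking $\epsilon$ makes $2y<1/Q^2$. Because $\log^+\ge0$, the remaining $x$ can be discarded. The number of such $p$ for a given odd $q$ is comparable to $\varphi(q)$ restricted to odd numerators, which is $\gg\varphi(q)$. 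Therefore
\[
\int_0^2\log^+|\lambda(x+iy)|\,dx\ \ge\ c_1\sum_{q\le Q,\ q\text{ odd}}\frac{\#\{p\}}{q^2}\ \gg\ \sum_{q\le Q,\ q \text{ odd}}\frac{\varphi(q)}{q^2}\ \gg\ \log Q\asymp\log\frac1y.
\]

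The main obstacle is making the cusp estimate uniform, which has two parts. The first is bookkeeping: identifying exactly which $R_\gamma$ occurs for each parity class. The second is uniformity: the $O(q^2)$ error in \eqref{eq:lambda-cusp} must be controlled uniformly as $\operatorname{Im}\gamma\tau\ge 1/(2\epsilon)$ grows. This is fine because (iv) is uniform modulo the period and $\operatorname{Im}\gamma\tau$ is bounded below by a large constant.

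The arithmetic step $\sum_{q\le Q,\,q\text{ odd}}\varphi(q)/q^2\gg\log Q$ is elementary: $\sum_{q\le Q}\varphi(q)/q^2\sim(6/\pi^2)\log Q$, and restricting to odd $q$ still gives a positive proportion. Alternatively one can avoid totients by counting Farey neighbours via the Stern--Brocot tree.
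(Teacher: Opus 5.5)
Your proof is correct, and it takes a genuinely different route from the paper's.

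\textbf{The paper's route.} The paper never locates where $|\lambda|$ is large. It applies Jensen's formula to $\Lambda-a$ in the $q$-disk. This bounds the horizontal mean of $\log^+|\lambda|$ below by $\pi\sum\operatorname{Im}\tau_k$, summed over the $a$-points $\tau_k$ in a period strip with $\operatorname{Im}\tau_k\ge 2y$. These $a$-points come from the orbit $\PSL_2(\ZZ)\cdot i$, where $\operatorname{Im}(M_{c,d}i)=1/(c^2+d^2)$. The estimate therefore reduces to a primitive lattice-point sum $\sum 1/(c^2+d^2)\gg\log X$. A pigeonhole over the finite set $\mathcal S=\{R(\lambda(i)):R\in\mathcal A\}$ means the paper never needs to know which $R_\gamma$ occurs at which orbit point.

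\textbf{Your route.} You give a pointwise lower bound $\log^+|\lambda|\ge \pi/(2q^2y)-O(1)$ on disjoint windows $|x-p/q|\le y$. These windows sit around the $\Gamma(2)$-cusps of the class of $1$, that is, $p,q$ both odd with $q\le(\epsilon/y)^{1/2}$. You then sum $\sum_{q\ \mathrm{odd}}\varphi(q)/q^2\gg\log Q$. The details check out:
\begin{itemize}
\item For odd $q$ there are exactly $\varphi(q)$ odd numerators coprime to $q$ in $(0,2q)$, by pairing $p\leftrightarrow p+q$.
\item All windows lie inside $[0,2]$, since $p/q\ge 1/Q>y$.
\item The windows are disjoint, since $2y\le 2\epsilon/Q^2<1/Q^2$.
\item The per-window bound $2y\bigl(\pi/(2q^2y)-C\bigr)\ge \pi/(2q^2)$ holds because $q^2y\le\epsilon$.
\item Uniformity needs only $|\lambda(\zeta)|\le Ce^{-\pi\operatorname{Im}\zeta}$ for $\operatorname{Im}\zeta\ge 1/(2\epsilon)$.
\end{itemize}

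\textbf{The cost.} You need the explicit laws $\lambda(\tau+1)=\lambda/(\lambda-1)$ and $\lambda(-1/\tau)=1-\lambda$, not just the existence statement (iii). From them, $\lambda(\tau)=(s-1)/s$ with $s=\lambda(\sigma\tau)$ and $\sigma(\tau)=-1/(\tau-1)$, so $R_\gamma^{-1}(0)=\infty$ exactly on the odd/odd class. These identities are standard and correct in the theta normalization used here.

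\textbf{What each buys.} Your argument avoids Jensen's formula and its boundary-zero and $C'_a$ bookkeeping. It also shows where the mass of $\log^+|\lambda|$ on the line $\operatorname{Im}\tau=y$ actually sits. The paper's argument is less explicit about where $|\lambda|$ is large, but it needs no case analysis of the anharmonic maps.
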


\begin{proof}
Periodicity and \eqref{eq:lambda-cusp} show that
\[
\Lambda(q):=\lambda(\tau),\qquad q=e^{\pi i\tau},
\]
is well defined and holomorphic in the unit disk, with $\Lambda(0)=0$.  Let $r=e^{-\pi y}$.  For $a\in\CC\setminus\{0\}$, the elementary inequality
\[
\log^+|u-a|\leq\log^+|u|+C_a
\]
and Jensen's formula applied to $\Lambda-a$ give
\[
\int_{-1}^{1}\log^+|\lambda(x+iy)|\dd x
 \geq 2\sum_{|q_k|\leq r}\log\frac{r}{|q_k|}-C'_a,
\]
where the $q_k$ are the zeros of $\Lambda-a$, counted with multiplicity.  If $|q_k|=r$ for some $k$, perturb $r$ and pass to the limit; such a boundary zero makes zero contribution.  These zeros correspond to the $a$-points $\tau_k$ of $\lambda$ in the period strip $-1\leq\operatorname{Re}\tau<1$, and $|q_k|=e^{-\pi\operatorname{Im}\tau_k}$.  If $\operatorname{Im}\tau_k\geq2y$, then
\[
2\log\frac r{|q_k|}
 =2\pi(\operatorname{Im}\tau_k-y)
 \geq\pi\operatorname{Im}\tau_k.
\]
Consequently,
\begin{equation}\label{eq:jensen-orbit}
\int_{-1}^{1}\log^+|\lambda(x+iy)|\dd x
 \geq
 \pi\!\sum_{\substack{-1\leq\operatorname{Re}\tau_k<1\\
                 \operatorname{Im}\tau_k\geq2y}}
 \operatorname{Im}\tau_k-C'_a.
\end{equation}

We next supply sufficiently many $a$-points.  For every primitive pair $(c,d)\in\ZZ^2$, choose $a_0,b_0\in\ZZ$ with $a_0d-b_0c=1$.  Replacing $(a_0,b_0)$ by $(a_0+nc,b_0+nd)$ for a suitable $n\in\ZZ$, we obtain
\[
M_{c,d}=\begin{pmatrix}a_0&b_0\\c&d\end{pmatrix}\in\operatorname{SL}_2(\ZZ)
\]
such that $z_{c,d}=M_{c,d}i$ satisfies
\begin{equation}\label{eq:orbit-coordinates}
-\frac12\leq\operatorname{Re}z_{c,d}<\frac12,
\qquad
\operatorname{Im}z_{c,d}=\frac1{c^2+d^2}.
\end{equation}
The stabilizer of $i$ in $\operatorname{SL}_2(\ZZ)$ is $\{\pm I,\pm S\}$, where $S=\bigl(\begin{smallmatrix}0&-1\\1&0\end{smallmatrix}\bigr)$.  Thus matrices sending $i$ to the same point have lower rows among
\[
(c,d),\quad(-c,-d),\quad(d,-c),\quad(-d,c),
\]
and each orbit point is counted at most four times.

For completeness, let
\[
L(R)=\#\{(c,d)\in\ZZ^2:0<c^2+d^2\leq R^2\}.
\]
Comparison with unit lattice squares gives $L(R)=\pi R^2+O(R)$.  M\"obius inversion therefore yields
\[
\begin{aligned}
N(R)
&:=\#\{(c,d)\in\ZZ^2:\gcd(c,d)=1,\ c^2+d^2\leq R^2\}\\
&=\sum_{k\leq R}\mu(k)L(R/k)
 =\frac6\pi R^2+O(R\log R).
\end{aligned}
\]
The constant $6/\pi^2$ is the classical density of coprime integer
pairs; see \cite[p.~354, Theorem~332]{HW08}.  The circular lattice-point
estimate, including the stated error term, follows directly from the
preceding lattice-square comparison and M\"obius inversion.
\begin{equation}\label{eq:primitive-reciprocal}
\sum_{\substack{\gcd(c,d)=1\\0<c^2+d^2\leq X}}
\frac1{c^2+d^2}\geq c_1\log X
\end{equation}
for all sufficiently large $X$.

Take $X=(2y)^{-1}$.  Equations \eqref{eq:orbit-coordinates} and \eqref{eq:primitive-reciprocal}, together with the fourfold multiplicity bound, imply
\[
\sum_{\substack{z\in\PSL_2(\ZZ)\cdot i\\
        -1<\operatorname{Re}z<1,\ \operatorname{Im}z\geq2y}}
\operatorname{Im}z
 \geq c_2\log\frac1y.
\]
Let $
\mathcal S=\{R(\lambda(i)):R\in\mathcal A\}.$
This is a fixed finite subset of $\CC\setminus\{0,1\}$.  By \eqref{eq:anharmonic}, every point of the orbit $\PSL_2(\ZZ)\cdot i$ is an $a$-point of $\lambda$ for some $a\in\mathcal S$.  Partitioning the last sum according to this value, we may choose $a=a_y\in\mathcal S$ whose $a$-points contribute at least $|\mathcal S|^{-1}$ of the total.  The constants $C'_a$ in \eqref{eq:jensen-orbit} are uniformly bounded because $\mathcal S$ is finite.  After decreasing $y_0$ and choosing a smaller constant $c>0$, \eqref{eq:jensen-orbit} gives the asserted estimate on $[-1,1]$, and hence on $[0,2]$ by periodicity.
\end{proof}

We now turn the logarithmic mean into the weighted spherical-area divergence used in the diagonal construction.

\begin{lem}\label{lem:height-area}
For every $a\in\RR$ and every $\varepsilon>0$,
\begin{equation}\label{eq:height-area}
\int_0^\varepsilon\int_a^{a+2}
y\bigl(\lambda^{\#}(x+iy)\bigr)^2\dd x\dd y=\infty.
\end{equation}
\end{lem}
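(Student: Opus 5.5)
Throughout, set $u(z)=\tfrac12\log\bigl(1+|\lambda(z)|^2\bigr)$. I would reduce \eqref{eq:height-area} to a one-variable convexity argument for the horizontal means of $u$, fed by Lemma~\ref{lem:log-mean}. As in the proof of Proposition~\ref{prop:energy-obstruction}, $u$ is real-analytic in $\HH$, satisfies
\[
\Delta u=2\bigl(\lambda^{\#}\bigr)^2,
\]
and $u\ge\log^+|\lambda|$. Since $\lambda$ is $2$-periodic, so is $u$. Hence the mean
\[
m(y)=\int_a^{a+2}u(x+iy)\dd x
\]
does not depend on $a$, and it is smooth in $y>0$.

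The first step is to compute $m''$. Differentiating under the integral sign, which is legitimate because $u$ is smooth on compact horizontal segments, gives $m''(y)=\int_a^{a+2}u_{yy}\dd x$. Periodicity kills the term $\int_a^{a+2}u_{xx}\dd x=\bigl[u_x\bigr]_a^{a+2}=0$, so
\[
m''(y)=\int_a^{a+2}\Delta u\dd x=2\int_a^{a+2}\bigl(\lambda^{\#}(x+iy)\bigr)^2\dd x\;\ge0 .
\]
Thus $m$ is convex on $(0,\infty)$. By Fubini–Tonelli, the integral in \eqref{eq:height-area} equals $\tfrac12\int_0^\varepsilon y\,m''(y)\dd y$.

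The second step is the blow-up of $m$ at $0$. Since $u\ge\log^+|\lambda|$, Lemma~\ref{lem:log-mean} together with periodicity gives
\[
m(y)\ge c\log(1/y)\quad\text{for }0<y<y_0,\qquad\text{so } m(y)\to+\infty \text{ as } y\to0^+.
\]
A convex function that tends to $+\infty$ at $0^+$ must satisfy $m'(\delta)<0$ for all sufficiently small $\delta$. Indeed, if $m'(\delta_1)\ge0$ for some $\delta_1$, then $m'\ge0$ on $(\delta_1,\infty)$ and $m'\le m'(\delta_1)$ on $(0,\delta_1)$. If moreover $m'\ge0$ at arbitrarily small points, then $m$ is nondecreasing near $0$ and hence bounded above there, a contradiction.

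The third step is integration by parts on $[\delta,\varepsilon]$, where we may shrink $\varepsilon$ without loss since the integrand is nonnegative:
\[
\int_\delta^\varepsilon y\,m''(y)\dd y=\varepsilon m'(\varepsilon)-\delta m'(\delta)-m(\varepsilon)+m(\delta)\ \ge\ \varepsilon m'(\varepsilon)-m(\varepsilon)+m(\delta).
\]
The inequality uses $-\delta m'(\delta)>0$ for small $\delta$. Letting $\delta\to0^+$, the right-hand side tends to $+\infty$, which gives \eqref{eq:height-area}.

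The only genuinely delicate point is the sign of the boundary term $\delta m'(\delta)$, which is handled by convexity. Everything else is routine once Lemma~\ref{lem:log-mean} is available.
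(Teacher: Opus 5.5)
Your proof is correct and is essentially the paper's. It uses the same horizontal mean, the same identity $m''(y)=2\int_a^{a+2}(\lambda^{\#}(x+iy))^2\dd x$, and the same integration by parts against $y$.

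The only difference is how you control the boundary term $-\delta m'(\delta)$. The paper uses $M(y)\to0$ as $y\to\infty$, from the cusp expansion \eqref{eq:lambda-cusp}, to get $M'\le 0$ everywhere. You instead get $m'(\delta)<0$ for small $\delta$ from the blow-up $m(\delta)\to+\infty$ alone. That is equally valid and avoids the cusp expansion.
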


\begin{proof}
It suffices to consider small $\varepsilon$.  Put
\[
v(\tau)=\frac12\log\bigl(1+|\lambda(\tau)|^2\bigr),
\qquad
M(y)=\int_a^{a+2}v(x+iy)\dd x.
\]
Since $\lambda$ is $2$-periodic and $\Delta v=2(\lambda^{\#})^2$, integration of the $x$-derivative over one period gives
\[
M''(y)=2\int_a^{a+2}\bigl(\lambda^{\#}(x+iy)\bigr)^2\dd x.
\]
Since $v(\tau)\geq \log^+|\lambda(\tau)|$,
Lemma~\ref{lem:log-mean} and periodicity imply
\[
M(y)\geq c\log\frac1y
\]
for all sufficiently small $y>0$.  On the other hand, \eqref{eq:lambda-cusp} gives $M(y)\to0$ as $y\to\infty$.  The preceding identity makes $M$ convex, and a convex function with a finite limit at $+\infty$ has $M'\leq0$.  Hence, for $0<\delta<\varepsilon$,
\[
\begin{aligned}
2\int_\delta^\varepsilon\int_a^{a+2}
y\bigl(\lambda^{\#}(x+iy)\bigr)^2\dd x\dd y
&=\int_\delta^\varepsilon yM''(y)\dd y\\
&=\varepsilon M'(\varepsilon)-\delta M'(\delta)
   -M(\varepsilon)+M(\delta)\\
&\geq M(\delta)-C_\varepsilon,
\end{aligned}
\]
where $C_\varepsilon=M(\varepsilon)-\varepsilon M'(\varepsilon)$.  The right-hand side tends to $+\infty$ as $\delta\downarrow0$, proving \eqref{eq:height-area}.
\end{proof}

\section{Farey refinements}\label{sec:farey}

Write every rational number in reduced form.  Two rationals
\[
\alpha=\frac pq<\beta=\frac rs,
\qquad q,s>0,
\]
are Farey neighbors if $rq-ps=1$.  Let $e(\alpha,\beta)$ be the upper semicircle with diameter $[\alpha,\beta]$.  Its diameter and maximal height are
\begin{equation}\label{eq:farey-size}
\beta-\alpha=\frac1{qs},
\qquad
\operatorname{height}e(\alpha,\beta)=\frac1{2qs}.
\end{equation}
The mediant
\[
\alpha\oplus\beta=\frac{p+r}{q+s}
\]
is a Farey neighbor of both endpoints.  A single refinement replaces $e(\alpha,\beta)$ by
\[
e(\alpha,\alpha\oplus\beta)\cup e(\alpha\oplus\beta,\beta).
\]
Both children lie in the closed half-disk bounded by the parent semicircle and its diameter.  Indeed, on $[\alpha,\alpha\oplus\beta]$ the squared heights of the left child and the parent are, respectively,
\[
(x-\alpha)(\alpha\oplus\beta-x)
\quad\text{and}\quad
(x-\alpha)(\beta-x),
\]
and the former is no larger; the other child is analogous.  Refining every current leaf edge $m$ times is called the full depth-$m$ refinement.

\begin{lem}\label{lem:uniform-collapse}
Every leaf edge in the full depth-$m$ refinement of the root edge $e(k,k+1)$ has Euclidean diameter at most $(m+1)^{-1}$ and height at most $(2m+2)^{-1}$.  Consequently, the corresponding boundary height functions decrease uniformly to zero on $[k,k+1]$; equivalently, the boundary graphs converge to that interval in Hausdorff distance.
\end{lem}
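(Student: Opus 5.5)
The plan is to track denominators through the refinement tree. Write the root edge as $e(k/1,(k+1)/1)$. By \eqref{eq:farey-size}, a leaf edge $e(p/q,r/s)$ has diameter $1/(qs)$ and height $1/(2qs)$, so the first two claims both reduce to showing $qs\geq m+1$ for every leaf edge after full depth-$m$ refinement.

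I would prove this by induction on $m$, with the invariant that every depth-$m$ leaf edge satisfies $q+s\geq m+2$. The case $m=0$ holds because $q=s=1$. For the inductive step, a child of $e(p/q,r/s)$ has denominators $(q,q+s)$ or $(q+s,s)$. Its denominator sum is therefore $2q+s$ or $q+2s$, which is at least $q+s+1$ since $q,s\geq1$. To pass from the sum to the product, note that for positive integers $qs\geq q+s-1$, because $(q-1)(s-1)\geq0$. Hence $qs\geq m+1$, which gives diameter at most $(m+1)^{-1}$ and height at most $(2m+2)^{-1}$.

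For the consequence, the depth-$m$ leaf edges tile $[k,k+1]$: their diameters are consecutive intervals, since each refinement splits $[\alpha,\beta]$ at the mediant. So the boundary height function $h_m$ is well defined on $[k,k+1]$, and $0\leq h_m\leq (2m+2)^{-1}$ uniformly. Monotonicity of $h_m$ in $m$ comes from the child-containment computation already given in the text: on each child interval the child's squared height is at most the parent's. Hausdorff convergence follows because the graph of $h_m$ lies within vertical distance $(2m+2)^{-1}$ of $[k,k+1]\times\{0\}$, and every point of the interval lies below some point of the graph.

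No step is a real obstacle. The only point needing care is the induction invariant: the product $qs$ itself need not grow by a fixed amount at each step, which is why the induction runs on the sum $q+s$ and only converts to the product at the end.
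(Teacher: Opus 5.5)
Your proof is correct and follows the paper's argument exactly: you induct on the denominator sum to get $q+s\geq m+2$, then use $(q-1)(s-1)\geq0$ to obtain $qs\geq m+1$, and finish with \eqref{eq:farey-size}. Your additional remarks on tiling, monotonicity, and Hausdorff convergence just spell out what the paper leaves as ``the conclusion follows.''
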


\begin{proof}
At depth zero the endpoint denominators are $(1,1)$.  Passing from a pair $(q,s)$ to a child replaces it by $(q,q+s)$ or $(q+s,s)$.  Along each branch, the sum of the denominators therefore increases by at least one at every step.  At depth $m$,
\[
q+s\geq m+2.
\]
Since $(q-1)(s-1)\geq0$, we have $qs\geq q+s-1\geq m+1$.  The conclusion follows from \eqref{eq:farey-size}.
\end{proof}

We next install these finite refinements in separated bays.  The initial proper arc chain is
\[
\gamma_0=\bigcup_{k\in\ZZ}e(k,k+1).
\]
It is the graph of a continuous function $h_0\colon\RR\to[0,1/2]$.  Define
\[
\Omega_0=\{x+iy:y>h_0(x)\}.
\]
For $n\geq1$, choose the pairwise disjoint bays and their inner intervals
\[
B_n=[6n,6n+4],
\qquad
J_n=[6n+1,6n+3]\Subset\operatorname{int}B_n.
\]
For fixed $n$ and $m\geq0$, refine to depth $m$ each of the four root edges over $B_n$, leaving every other root edge unchanged.  Denote the resulting boundary function by $h_{n,m}$ and put
\[
D_{n,m}=\{x+iy:y>h_{n,m}(x)\}.
\]
On $B_n$, Lemma~\ref{lem:uniform-collapse} gives
\[
0\leq h_{n,m}(x)-h_{n,\infty}(x)\leq\frac1{2m+2},
\]
while the two functions agree off $B_n$.  The child arcs lie below their parents, so the boundary heights decrease with $m$.  Hence $\bigcup_m D_{n,m}=D_{n,\infty}$, and
\begin{equation}\label{eq:one-bay-exhaustion}
D_{n,m}\uparrow D_{n,\infty},
\end{equation}
where
\[
h_{n,\infty}(x)=
\begin{cases}
0,&x\in B_n,\\
h_0(x),&x\notin B_n,
\end{cases}
\qquad
D_{n,\infty}=\{x+iy:y>h_{n,\infty}(x)\}.
\]
All these domains contain $i$ and lie in $\HH$.

\section{The Green-energy diagonal construction}\label{sec:diagonal}

The next lemma converts the limiting divergence in one bay into a finite refinement.  Its proof is the point at which boundary regularity of the Green function enters.

\begin{lem}\label{lem:finite-refinement}
For every $n\geq1$ there exist $\varepsilon_n>0$ and an integer $m_n\geq0$ such that, with
\[
Q_n=\{x+iy:x\in J_n,\ 0<y<\varepsilon_n\},
\]
one has
\[
\int_{Q_n\cap D_{n,m_n}}
G_{D_{n,m_n}}(i,z)\bigl(\lambda^{\#}(z)\bigr)^2\dd A(z)>1.
\]
\end{lem}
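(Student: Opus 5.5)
Fix $n\geq1$. We work first with the limiting domain $D_{n,\infty}$, whose boundary over the bay $B_n$ is the real segment $B_n$ itself. Then we pass to a finite refinement via the exhaustion \eqref{eq:one-bay-exhaustion} and monotone convergence.

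\emph{Step 1: a lower bound $G_{D_{n,\infty}}(i,z)\geq c_n\operatorname{Im}z$ on a box over $J_n$.} Choose $\varepsilon_n\in(0,1/4)$ and consider the rectangle
\[
R_n=\{x+iy:\ 6n+\tfrac12<x<6n+\tfrac72,\ 0<y<1\}.
\]
Since $h_{n,\infty}=0$ on $B_n$, the rectangle $R_n$ lies in $D_{n,\infty}$, and its bottom side lies on $\partial D_{n,\infty}$. Fix a point $w_n\in R_n$, for example $w_n=6n+2+\tfrac{i}{2}$. By Harnack's inequality along a chain of discs joining $i$ to $w_n$ in $D_{n,\infty}$, we get $G_{D_{n,\infty}}(i,\cdot)\geq\eta_n>0$ on a small closed disc around $w_n$, and in particular on a horizontal segment $\sigma$ well inside $R_n$. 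We then compare $G_{D_{n,\infty}}(i,\cdot)$ on the subrectangle below $\sigma$ with the harmonic function that is $0$ on the bottom side, $0$ on the vertical sides, and $\eta_n$ on $\sigma$. This comparison function vanishes linearly at the flat bottom edge away from the corners: its separated series, or the Hopf lemma, gives a bound $\geq c\,y$ uniformly over $x\in J_n$ once $y<\varepsilon_n$, because $J_n$ stays a fixed distance from the vertical sides. The maximum principle then yields
\[
G_{D_{n,\infty}}(i,x+iy)\geq c_n y,\qquad x\in J_n,\ 0<y<\varepsilon_n.
\]
This is the step I expect to need the most care. It is where boundary regularity enters, and the constants must be tracked honestly: Harnack for the interior lower bound, and a boundary comparison with an explicit function on a rectangle.

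\emph{Step 2: infinite limiting energy.} Since $J_n=[6n+1,6n+3]$ has length $2$, Lemma~\ref{lem:height-area} with $a=6n+1$ and $\varepsilon=\varepsilon_n$ gives $\int_{Q_n}y(\lambda^{\#})^2\dd A=\infty$. Combined with Step 1, this shows
\[
\int_{Q_n}G_{D_{n,\infty}}(i,z)\bigl(\lambda^{\#}(z)\bigr)^2\dd A(z)=\infty .
\]
Note that $Q_n\subset\HH$ and $\lambda$ is holomorphic there, so the integrand is finite pointwise.

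\emph{Step 3: finite refinement.} We extend the Green functions by zero, as agreed after \eqref{eq:green-exhaustion}. By \eqref{eq:one-bay-exhaustion} and \eqref{eq:green-exhaustion}, the functions $G_{D_{n,m}}(i,\cdot)\mathbf 1_{D_{n,m}}$ increase pointwise on $Q_n$ to $G_{D_{n,\infty}}(i,\cdot)$. Every $z\in Q_n$ has $\operatorname{Im}z>0=h_{n,\infty}(\operatorname{Re}z)$, so $z\in D_{n,m}$ for large $m$ by Lemma~\ref{lem:uniform-collapse}, and the pole $i$ does not lie in $Q_n$. Monotone convergence then gives
\[
\int_{Q_n\cap D_{n,m}}G_{D_{n,m}}(i,z)(\lambda^{\#})^2\dd A\uparrow\infty.
\]
Hence some finite $m_n$ makes the integral exceed $1$, which proves the lemma.
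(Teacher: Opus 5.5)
Your proof is correct. Steps 2 and 3 match the paper's argument: Lemma~\ref{lem:height-area} on $Q_n$, then monotone convergence along \eqref{eq:one-bay-exhaustion}. Step 1, however, takes a different route.

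The paper works with $G_{D_{n,\infty}}(i,\cdot)$ directly. Regularity of the points of $J_n$ gives a continuous extension by zero, and odd reflection makes it harmonic, hence smooth, across $J_n$. The Hopf lemma then gives $\partial_y G>0$ on $J_n$, and a uniform bound $\partial_y G\geq c_n$ on $J_n\times[0,\varepsilon_n]$ is integrated vertically.

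You instead take an interior lower bound and carry it down to the edge by the maximum principle, comparing with a model function on a rectangle. The paper's route is shorter but relies on boundary regularity and reflection. Yours needs only positivity of $G$ on the bottom and vertical sides: your comparison function vanishes there, so the boundary inequality is automatic, and you never use that $G$ extends continuously by zero to $J_n$. You can make it completely explicit and Hopf-free by using the single mode
\[
v(x,y)=\eta_n\,\frac{\sin\bigl(\pi(x-6n-\tfrac12)/3\bigr)\sinh(\pi y/3)}{\sinh(\pi/6)}
\]
on $(6n+\tfrac12,6n+\tfrac72)\times(0,\tfrac12)$. This function is harmonic, vanishes on the bottom and vertical sides, and is at most $\eta_n$ on the top. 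For $x\in J_n$ the sine is at least $\tfrac12$ and $\sinh t\geq t$, so $v\geq \eta_n\pi y/(6\sinh(\pi/6))$.

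There is one slip to fix. A horizontal segment inside a small disc about $w_n$ is too short to be the top side of a rectangle whose vertical sides stay a fixed distance from $J_n$. Take instead
\[
\sigma=\{x+\tfrac i2:6n+\tfrac12\leq x\leq 6n+\tfrac72\}.
\]
This is a compact subset of $D_{n,\infty}\setminus\{i\}$, so the positive continuous function $G_{D_{n,\infty}}(i,\cdot)$ has a positive minimum $\eta_n$ on it, and no Harnack chain is needed.
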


\begin{proof}
The limiting domain $D_{n,\infty}$ agrees locally with $\HH$ along the interior of $B_n$.  The function
\[
z\longmapsto G_{D_{n,\infty}}(i,z)
\]
is positive and harmonic near $J_n$ in the upper half-plane and has no pole there.  Every point of $J_n$ is a regular boundary point, because the domain agrees locally with a half-disk.  The Green function therefore extends continuously by zero to $J_n$.
Its odd reflection across the real segment is harmonic, hence smooth, in a
full neighborhood of $J_n$.  The Hopf boundary point lemma
\cite[Lemma~3.4, p.~34]{GT01} gives
\[
\partial_y G_{D_{n,\infty}}(i,x)>0,
\qquad x\in J_n.
\]
By continuity and compactness, there exist constants
$c_n,\varepsilon_n>0$ such that
\[
\partial_y G_{D_{n,\infty}}(i,x+it)\geq c_n,
\qquad x\in J_n,\quad 0\leq t\leq\varepsilon_n.
\]
Since $G_{D_{n,\infty}}(i,x)=0$ for $x\in J_n$, integration in the
vertical direction yields
\begin{equation}\label{eq:green-linear-lower}
	G_{D_{n,\infty}}(i,x+iy)\geq c_n y,
	\qquad x\in J_n,\quad0<y<\varepsilon_n.
\end{equation}
By \eqref{eq:height-area} and \eqref{eq:green-linear-lower},
\[
\int_{Q_n}G_{D_{n,\infty}}(i,z)
\bigl(\lambda^{\#}(z)\bigr)^2\dd A(z)=\infty.
\]

Every point of $Q_n$ belongs to $D_{n,\infty}$, because the limiting boundary vanishes on $B_n$.  Extend $G_{D_{n,m}}(i,\cdot)$ by zero on $Q_n\setminus D_{n,m}$.  The domain exhaustion \eqref{eq:one-bay-exhaustion}, the Green-kernel convergence \eqref{eq:green-exhaustion}, and the uniform collapse of the boundary imply pointwise monotone convergence on all of $Q_n$ to $G_{D_{n,\infty}}(i,\cdot)$.  Monotone convergence now shows that the finite-depth integrals tend to infinity.  Some finite $m=m_n$ gives the required inequality.
\end{proof}

We choose the integer $m_n$ supplied by Lemma~\ref{lem:finite-refinement} in the $n$th bay and make no other change to $\gamma_0$.  More precisely, set
\[
h(x)=
\begin{cases}
h_{n,m_n}(x),&x\in B_n\text{ for some }n,\\
h_0(x),&x\notin\bigcup_{n\geq1}B_n,
\end{cases}
\]
and define
\[
\gamma=\{x+ih(x):x\in\RR\},
\qquad
\Omega=\{x+iy:y>h(x)\}.
\]
At each endpoint of a bay, both adjacent boundary arcs have height zero; hence $h$ is continuous.  Each $m_n$ is finite, and every compact real interval meets only finitely many bays, so $\gamma$ is a locally finite chain of Farey edges.  Moreover, $|x+ih(x)|\geq|x|$ and $0\leq h\leq1/2$.  Thus $\gamma$ is a continuous proper graph, $\Omega\subset\HH$, and $i\in\Omega$.

\begin{prop}\label{prop:infinite-energy}
The final domain satisfies
\begin{equation}\label{eq:omega-infinite-energy}
\int_\Omega G_\Omega(i,z)\bigl(\lambda^{\#}(z)\bigr)^2\dd A(z)=\infty.
\end{equation}
\end{prop}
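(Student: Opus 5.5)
We bound the integral over $\Omega$ from below by a sum of the one-bay contributions from Lemma~\ref{lem:finite-refinement}. Those contributions are each larger than one, so the sum diverges.

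The first step is domain inclusion: $D_{n,m_n}\subset\Omega$ for every $n\geq1$. The boundary function $h_{n,m_n}$ agrees with $h$ on $B_n$, where both equal the depth-$m_n$ refinement. Off $B_n$, $h_{n,m_n}$ equals $h_0$. Wherever $h$ differs from $h_0$, namely on another bay $B_k$, the refined arcs lie below their parents, as shown in Section~\ref{sec:farey}. Hence $h\leq h_{n,m_n}$ everywhere, and so $D_{n,m_n}\subset\Omega$. Both domains contain $i$ and are Greenian, being subsets of $\HH$. Domain monotonicity therefore gives
\[
G_{D_{n,m_n}}(i,z)\leq G_\Omega(i,z),\qquad z\in D_{n,m_n}.
\]

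The second step uses disjointness of the regions. The sets $Q_n\cap D_{n,m_n}$ lie over the intervals $J_n=[6n+1,6n+3]$, which are pairwise disjoint, so these regions are pairwise disjoint. Each of them is contained in $\Omega$. The integrand $G_\Omega(i,\cdot)(\lambda^{\#})^2$ is nonnegative on $\Omega$. Hence, for every $N$,
\[
\int_\Omega G_\Omega(i,z)\bigl(\lambda^{\#}(z)\bigr)^2\dd A(z)
\geq\sum_{n=1}^N\int_{Q_n\cap D_{n,m_n}}G_{D_{n,m_n}}(i,z)\bigl(\lambda^{\#}(z)\bigr)^2\dd A(z)>N,
\]
where the last inequality is Lemma~\ref{lem:finite-refinement}. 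Letting $N\to\infty$ proves \eqref{eq:omega-infinite-energy}.

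The only point needing care is the pointwise inequality $h\leq h_{n,m_n}$, which gives the inclusion. It reduces to the observation that refinement inside a bay only lowers the boundary, while outside all bays $h=h_0$. The rest is formal nonnegativity and monotonicity.
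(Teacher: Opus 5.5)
Your proposal is correct and follows the paper's proof essentially step for step. Both arguments obtain the inclusion $D_{n,m_n}\subset\Omega$ from $h\leq h_{n,m_n}$, apply domain monotonicity of the Green function, use the pairwise disjointness of the rectangles $Q_n$, and sum the contributions exceeding one supplied by Lemma~\ref{lem:finite-refinement}.
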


\begin{proof}
Fix $n$.  On $B_n$ the final boundary agrees with $h_{n,m_n}$.  Outside $B_n$, every other bay refinement only lowers $h_0$, whereas $h_{n,m_n}=h_0$ there.  Thus
\[
h\leq h_{n,m_n}\quad\text{on }\RR,
\qquad
D_{n,m_n}\subset\Omega.
\]
Domain monotonicity gives
\[
G_\Omega(i,z)\geq G_{D_{n,m_n}}(i,z),
\qquad z\in D_{n,m_n}.
\]
The rectangles $Q_n$ are pairwise disjoint.  Hence, for $N\geq1$, Lemma~\ref{lem:finite-refinement} yields
\[
\begin{aligned}
\int_\Omega G_\Omega(i,z)\bigl(\lambda^{\#}(z)\bigr)^2\dd A(z)
&\geq\sum_{n=1}^N\int_{Q_n\cap D_{n,m_n}}
G_\Omega(i,z)\bigl(\lambda^{\#}(z)\bigr)^2\dd A(z)\\
&\geq\sum_{n=1}^N\int_{Q_n\cap D_{n,m_n}}
G_{D_{n,m_n}}(i,z)\bigl(\lambda^{\#}(z)\bigr)^2\dd A(z)>N.
\end{aligned}
\]
Letting $N\to\infty$ proves \eqref{eq:omega-infinite-energy}.
\end{proof}

\section{Reflection across the Farey boundary}\label{sec:reflection}

It remains to show that uniformization followed by Schwarz reflection produces a function meromorphic at every reflected vertex.  We begin with the open edges.

\begin{lem}\label{lem:real-edges}
The function $\lambda$ takes finite real values on the interior of every Farey edge.
\end{lem}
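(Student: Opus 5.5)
Every Farey edge is the image of one reference edge under $\PSL_2(\ZZ)$, so the plan is to check the claim once, on the imaginary axis, and transport it to all other edges with the anharmonic relation \eqref{eq:anharmonic}.

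\emph{Reference edge.} Take the edge joining $0$ and $\infty$, namely the open imaginary axis $\{iy:y>0\}$. By property (v), $\lambda(iy)$ is real for every $y>0$. By property (i), $\lambda$ is holomorphic on $\HH$ and omits $\infty$, so these values are finite. They also avoid $0$ and $1$, although the lemma does not need this.

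\emph{Transitivity on edges.} Let $\alpha=p/q<\beta=r/s$ be Farey neighbors, so $rq-ps=1$. Consider
\[
\gamma=\begin{pmatrix}r&p\\ s&q\end{pmatrix}\in\operatorname{SL}_2(\ZZ),
\qquad \det\gamma=rq-ps=1 .
\]
Then $\gamma(0)=p/q=\alpha$ and $\gamma(\infty)=r/s=\beta$. M\"obius maps in $\PSL_2(\RR)$ preserve $\HH$ and the real line, and they send hyperbolic geodesics to hyperbolic geodesics. Hence $\gamma$ maps the open imaginary axis bijectively onto the open geodesic with endpoints $\alpha$ and $\beta$, which is exactly the interior of $e(\alpha,\beta)$. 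This is a standard fact, but it should be stated explicitly, since the semicircle is determined by its two real endpoints.

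\emph{Transport.} A point $w$ in the interior of $e(\alpha,\beta)$ has the form $w=\gamma(iy)$ with $y>0$. By \eqref{eq:anharmonic},
\[
\lambda(w)=R_\gamma\bigl(\lambda(iy)\bigr).
\]
Every $R\in\mathcal A$ has real coefficients, so it maps $\RR\cup\{\infty\}$ to itself. The only real points sent to $\infty$ are $0$ and $1$, and $\lambda(iy)$ avoids these by property (i). Hence $\lambda(w)$ is finite and real. Alternatively, finiteness follows directly from $w\in\HH$ and the fact that $\lambda$ omits $\infty$ on $\HH$, and reality follows from the fact that $R_\gamma$ is a real M\"obius map.

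\emph{Main point of care.} Nothing here is genuinely hard. The one step to state carefully is the identification of $\gamma(i\RR_{>0})$ with the open semicircle $e(\alpha,\beta)$; it follows because $\gamma$ is a real M\"obius map carrying the endpoints $0,\infty$ to $\alpha,\beta$.
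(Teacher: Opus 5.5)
Your proposal is correct and follows the paper's proof: you check reality on the imaginary axis via property (v), transport it to every Farey edge by an element of $\PSL_2(\ZZ)$ together with the real anharmonic relation \eqref{eq:anharmonic}, and get finiteness from the fact that $\lambda$ omits $\infty$ in $\HH$. Your explicit matrix $\begin{pmatrix}r&p\\ s&q\end{pmatrix}$ sending $0,\infty$ to $\alpha,\beta$ spells out the transitivity step that the paper only asserts.
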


\begin{proof}
The positive imaginary axis is the hyperbolic geodesic joining $0$ to $\infty$, and $\lambda(iy)$ is real for $y>0$.  Every Farey edge is the image of this geodesic under an element of $\PSL_2(\ZZ)$.  Each transformation in \eqref{eq:anharmonic} has real coefficients and preserves $\widehat\RR$.  Thus $\lambda$ is real on every open Farey edge.  It is finite there because it is holomorphic and omits $\infty$ in $\HH$.
\end{proof}

Let $V$ be the vertex set of $\gamma$.  Every vertex is rational, $V$ is locally finite in $\CC$, and its only accumulation point in the Riemann sphere is $\infty$.  The local geometry at a vertex becomes especially transparent after a modular transformation.

\begin{lem}\label{lem:cusp}
Let $v\in V$, and let $e(v,u_1)$ and $e(v,u_2)$ be the two boundary edges incident to $v$.  There exist $A\in\PSL_2(\ZZ)$ with $A(v)=\infty$ and two distinct integers $k_1,k_2$ such that $A(e(v,u_j))$ is the vertical geodesic $\{\operatorname{Re}\zeta=k_j,\ \operatorname{Im}\zeta>0\}$.  Moreover, above some height, the end of $A(\Omega)$ corresponding to $v$ is exactly the finite-width strip between these two geodesics.  In particular,
\begin{equation}\label{eq:cusp-limit}
\operatorname{Im}A(w)\longrightarrow+\infty
\qquad\text{as }w\to v\text{ within }\Omega.
\end{equation}
\end{lem}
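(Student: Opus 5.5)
The plan is to use the transitivity of $\PSL_2(\ZZ)$ on Farey edges and the fact that the Farey tessellation is preserved by it. First, $v=p/q$ is rational in lowest terms. Choose integers $a,b$ with $ap+bq=1$ and form $A_0=\bigl(\begin{smallmatrix}a&b\\-q&p\end{smallmatrix}\bigr)\in\PSL_2(\ZZ)$, so that $A_0(v)=\infty$. (For the vertex $v=\infty$ this is not needed, since $V\subset\RR$ consists of rational vertices.) Since $A_0$ maps Farey edges to Farey edges and fixes the tessellation, each edge $e(v,u_j)$ goes to a Farey edge with one endpoint at $\infty$. Such edges are exactly the vertical lines $\{\operatorname{Re}\zeta=k\}$ with $k\in\ZZ$. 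Thus $A_0(e(v,u_j))$ is $\{\operatorname{Re}\zeta=k_j\}$, and $k_1\ne k_2$ because $u_1\ne u_2$ and $A_0$ is injective. We may take $A=A_0$; composing with an integer translation changes nothing essential.

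Next I identify the end of $A(\Omega)$ at $v$. The two incident edges are the only boundary edges of $\gamma$ containing $v$, because $\gamma$ is a graph over $\RR$ and $v$ is a vertex of the chain. Near $v$, $\Omega$ is the region above both arcs, that is, the curvilinear sector between $e(v,u_1)$ and $e(v,u_2)$ which lies above $\gamma$. I would verify this locally: for a small disk $U$ around $v$, $U\setminus\gamma$ has exactly two components, and the one containing points $v+it$ with small $t>0$ lies in $\Omega$. Since $A$ is a homeomorphism of $\overline{\HH}$ (including the sphere), a neighbourhood of $v$ maps to a neighbourhood of $\infty$, namely $\{\operatorname{Im}\zeta>Y\}$ intersected with $\HH$ for large $Y$. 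Locally finitely many other edges of $\gamma$ near $v$ have images that are bounded semicircles away from $\infty$. Hence above some height $Y$, the image of $\gamma$ consists only of the two vertical lines, and $A(\Omega)\cap\{\operatorname{Im}\zeta>Y\}$ is one of the components of $\{\operatorname{Im}\zeta>Y\}\setminus(\{\operatorname{Re}=k_1\}\cup\{\operatorname{Re}=k_2\})$.

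The key step, and the one I expect to be the main obstacle, is choosing the right component: the bounded-width strip between $k_1$ and $k_2$ rather than one of the outer half-planes. I would argue by orientation. Points of $\Omega$ near $v$ are reached from points $v+it$ directly above $v$, which lie between the two arcs emanating from $v$ to the left and right ($u_1<v<u_2$ after relabelling). The image under $A$ of the vertical geodesic from $v$ is a geodesic from $\infty$, hence a vertical line $\operatorname{Re}\zeta=\xi$. Because $A$ preserves the cyclic order of $\widehat\RR$ and $u_1<v<u_2$ with the vertical ray in between, $\xi$ lies strictly between $k_1$ and $k_2$. Alternatively, I can note that the domain between the two arcs near $v$ is a horn with angle zero. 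Its image must therefore lie between the two image lines, while the outer half-planes correspond to the region below $\gamma$, outside $\Omega$.

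Finally, \eqref{eq:cusp-limit} follows. As $w\to v$ in $\Omega$, $A(w)\to\infty$ in the sphere while remaining in the strip of width $|k_1-k_2|$, so $\operatorname{Im}A(w)\to+\infty$.
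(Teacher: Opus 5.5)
Your proposal is correct and follows essentially the paper's route. It uses the same normalizing matrix, Farey adjacency to send the incident edges to the integer vertical lines $\operatorname{Re}\zeta=k_j$, a small disk about $v$ meeting $\gamma$ only in the two edge germs, and the points $v+it\in\Omega$ to select the middle strip. The only difference is how you show that $A(\infty)$ lies strictly between $k_1$ and $k_2$: you use preservation of cyclic order on $\widehat{\RR}$, whereas the paper uses the explicit formula $A(u)-a/q=-s/(q(qr-ps))$. Both arguments work.
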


\begin{proof}
Write $v=p/q$ in reduced form, with $q>0$, and choose $a,b\in\ZZ$ so that $ap+bq=-1$.  Then
\[
A(w)=\frac{aw+b}{qw-p}
\]
belongs to $\PSL_2(\ZZ)$ and sends $v$ to $\infty$.  The modular group preserves Farey adjacency.  A Farey neighbor of $\infty=1/0$ has denominator one and is therefore an integer.  Hence $A(u_1)$ and $A(u_2)$ are distinct integers, and the incident geodesics become the corresponding vertical lines.

We determine the relevant side.  Label the adjacent vertices so that $u_1<v<u_2$.  If $u=r/s$ is either neighbor, direct calculation gives
\[
A(u)-\frac aq=-\frac{s}{q(qr-ps)}.
\]
The determinant $qr-ps$ equals $-1$ for the left neighbor and $1$ for the right neighbor, so the two integers $A(u_1)$ and $A(u_2)$ lie on opposite sides of $a/q$.  Furthermore,
\[
A(v+it)=\frac aq+\frac{i}{q^2t},
\qquad t>0,
\]
and $v+it\in\Omega$.

Local finiteness and the graph property provide a disk $U$ about $v$ meeting $\gamma$ only in the two incident edge germs.  The component of $\Omega\cap U$ accumulating at $v$ is the region between those germs.  Its image under $A$ therefore lies between the two vertical geodesics.  If $w\to v$ in $\Omega$, then $|A(w)|\to\infty$, while the real part remains between the two corresponding integers.  This proves \eqref{eq:cusp-limit}.

Finally, the sets $A^{-1}\{\operatorname{Im}\zeta>Y\}$ are horodisks shrinking to $v$ as $Y\to\infty$.  Choose $Y$ so large that such a horodisk lies in $U$.  Above height $Y$, the only boundary pieces of $A(\Omega)$ are the two vertical geodesics.  Since the middle interval contains $A(v+it)$ for small $t$, crossing either boundary geodesic exits the Jordan domain.  Thus, with $k_-=\min(k_1,k_2)$ and $k_+=\max(k_1,k_2)$,
\[
A(\Omega)\cap\{\operatorname{Im}\zeta>Y\}
=\{\zeta:k_-<\operatorname{Re}\zeta<k_+,\ \operatorname{Im}\zeta>Y\}.
\]
\end{proof}

The cusp description makes every exceptional reflected point an isolated meromorphic singularity.

\begin{prop}\label{prop:reflection}
There is a conformal map $W\colon\HH\to\Omega$ satisfying
\[
W(i)=i,
\qquad
W(\infty)=\infty
\]
in the sense of prime ends.  For such a map, $f=\lambda\circ W$ extends by reflection to a meromorphic function $F$ on $\CC$, and
\[
F^{-1}\{0,1,\infty\}\subset\RR.
\]
\end{prop}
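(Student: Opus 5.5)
We first establish the existence and boundary behaviour of $W$, then reflect across the open edges, and finally show that each vertex is an isolated meromorphic singularity.

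\textbf{Existence and boundary correspondence of $W$.} The set $\Omega$ is the region above the graph of the continuous function $h$, so it is simply connected. Its boundary in $\wideC$ is the Jordan curve $\gamma\cup\{\infty\}$, because $\gamma$ is a proper continuous graph. By the Riemann mapping theorem there is a conformal map $\HH\to\Omega$ with $i\mapsto i$. Carath\'eodory's theorem extends it to a homeomorphism of closures in $\wideC$. Post-composing with a M\"obius automorphism of $\HH$ fixing $i$ (a rotation about $i$ in the hyperbolic disk model), we can arrange that the boundary point sent to $\infty$ is $\infty$. This gives $W$, and $W$ maps $\RR$ homeomorphically onto $\gamma$. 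Hence the preimage $W^{-1}(V)$ is a discrete subset of $\RR$: it is closed and locally finite, because $V$ accumulates only at $\infty=W(\infty)$. Each open interval of $\RR\setminus W^{-1}(V)$ is mapped onto an open Farey edge, which is an analytic arc, a circular arc.

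\textbf{Reflection across the open edges.} On each open Farey edge $\lambda$ is finite and real by Lemma~\ref{lem:real-edges}. Along the edge, $W$ extends analytically across the corresponding real interval by reflection in the circle, since the edge is an analytic arc. Hence $f=\lambda\circ W$ extends continuously to that interval with real finite values. The Schwarz reflection principle, $F(\bar z)=\overline{f(z)}$, then gives a holomorphic extension across every interval of $\RR\setminus W^{-1}(V)$. This yields a holomorphic $F$ on $\CC\setminus W^{-1}(V)$ which has no values in $\{0,1,\infty\}$ off $\RR$. Indeed, $\lambda$ omits these values in $\HH$, and the lower half-plane receives the conjugates of these values, while $\{0,1,\infty\}$ is invariant under conjugation.

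\textbf{Isolated singularities at the vertices.} This is the main step. Fix $x_0=W^{-1}(v)$ with $v\in V$, and take $A$ and $k_\pm$ from Lemma~\ref{lem:cusp}. By item (iii) of Section~\ref{sec:lambda}, $\lambda=R\circ\lambda\circ A$ for some $R\in\mathcal A$ (apply (iii) to $A^{-1}$). Near $x_0$ in $\HH$, $A\circ W$ maps a half-disk neighbourhood onto the half-strip $S=\{k_-<\operatorname{Re}\zeta<k_+,\ \operatorname{Im}\zeta>Y\}$, with boundary arcs going to the vertical sides. We compose with $\zeta\mapsto\exp\bigl(\pi i(\zeta-k_-)/(k_+-k_-)\bigr)$, which sends $S$ conformally onto a half-disk punctured at $0$, with the vertical sides going to real segments. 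The composite $\phi$ from a half-neighbourhood of $x_0$ onto a half-neighbourhood of $0$ then sends real boundary to real boundary, with $x_0\mapsto0$. Reflecting, $\phi$ becomes a conformal map of a punctured disk about $x_0$ onto a punctured disk about $0$, hence extends conformally across $x_0$.

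Next we handle $\lambda$ on $S$. Writing $w=e^{\pi i\zeta}$, the $2$-periodicity gives $\lambda(\zeta)=\Lambda(w)$ with $\Lambda$ holomorphic and $\Lambda(0)=0$. The width $k_+-k_-$ is an integer $d\ge1$, so $w=e^{\pi i k_-}\,\xi^{d}$, where $\xi$ is the exponential coordinate above. Hence on the half-disk, $\lambda\circ A\circ W=\Lambda(\pm\phi^{d})$. This is holomorphic in $\phi$, with a zero at $x_0$, and is real on the real boundary. Therefore $F=R\circ\Lambda(\pm\phi^{d})$ near $x_0$ on the upper side. Reflection shows that the upper-side expression and the lower-side one coincide as a single holomorphic function of $\phi$, because both are real on the real axis.

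\textbf{Conclusion.} The singularity at $x_0$ is therefore removable in the meromorphic sense: $F$ is meromorphic there, with $F(x_0)=R(0)\in\{0,1,\infty\}$. Thus $F$ is meromorphic on $\CC$. Its values $0,1,\infty$ occur only on $\RR$, namely at points of $W^{-1}(V)$ or possibly at real points of the edges, which also lie on $\RR$. This gives $F^{-1}\{0,1,\infty\}\subset\RR$.

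The delicate points are the exponential-coordinate bookkeeping and checking that $\phi$ is a conformal bijection onto a half-disk near $0$. For the latter I would use the exact strip description in Lemma~\ref{lem:cusp}, together with the fact that the boundary correspondence of $W$ is a homeomorphism.
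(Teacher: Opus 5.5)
Your proof is correct; it departs from the paper's only at the vertices.

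\textbf{The paper's route.} It argues by limits. From \eqref{eq:cusp-limit} and boundary continuity, $\operatorname{Im}A(W(z))\to+\infty$. Then \eqref{eq:lambda-cusp} gives $F(z)\to R_{A^{-1}}(0)\in\{0,1,\infty\}$ spherically from both half-disks. Riemann's removable-singularity theorem, applied to $F$ or to $1/F$, finishes.

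\textbf{Your route.} You build the chart $\phi=\exp\bigl(\pi i(A\circ W-k_-)/d\bigr)$, reflect it, and remove its bounded singularity to get a conformal coordinate at $x_0$. This yields the explicit representation $F=R\circ\Lambda(\pm\phi^{d})$.

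\textbf{What each buys.} The paper's argument is shorter. It needs only the limit statement \eqref{eq:cusp-limit}, not the exact half-strip description in Lemma~\ref{lem:cusp} or the bijectivity of the exponential coordinate. Yours uses the full cusp description, but gives more. Since $\Lambda'(0)=16\neq0$ and $\phi'(x_0)\neq0$, each point of $W^{-1}(V)$ is a $0$-, $1$- or $\infty$-point of $F$ of multiplicity exactly the cusp width $d=k_+-k_-$. The paper only asserts that this order is finite. Also, the agreement of your local formula with the lower-half-plane reflection is automatic, by the identity theorem or because $\Lambda$, $R$ and the reflected $\phi$ are real on the real axis.

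\textbf{Two cosmetic fixes.} First, the automorphism of $\HH$ fixing $i$ must be \emph{pre}composed with the Riemann map, not postcomposed. Second, what $A\circ W$ maps onto $S$ is the set $(A\circ W)^{-1}(S)$, not literally a half-disk. By \eqref{eq:cusp-limit} this set is a relative neighbourhood of $x_0$ in $\HH\cup\RR$, and that is all your argument uses.
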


\begin{proof}
Since $\gamma$ is a continuous proper graph, its one-point compactification $\gamma\cup\{\infty\}$ is a Jordan curve in the Riemann sphere.  Thus $\Omega$ is a Jordan domain on the sphere, with $\infty$ as a boundary point.  After applying a M\"obius transformation to reduce to bounded Jordan domains, the Riemann mapping theorem and Carath\'eodory's boundary theorem \cite[Chapter~6, Sections~1.1--1.2, especially p.~232]{Ahl79} give a conformal map from $\HH$ onto $\Omega$ that extends homeomorphically to the compactified boundaries.  Precomposition with an automorphism of $\HH$ first arranges that $\infty$ corresponds to $\infty$.  If $z_0=x_0+iy_0$ is then the preimage of $i$, further precomposition with $z\mapsto x_0+y_0z$ fixes $\infty$ and sends $i$ to $z_0$.  This gives the stated normalization.

Let
\[
E=W^{-1}(V)\subset\RR,
\]
where the boundary extension is understood.  Since $V$ has no finite accumulation point and $W(\infty)=\infty$, the set $E$ is locally finite in $\RR$.  On every component of $\RR\setminus E$, the boundary values of $W$ lie in the interior of one Farey edge.  Lemma~\ref{lem:real-edges} therefore gives finite real boundary values for $f$ there.  Define
\[
F(z)=
\begin{cases}
f(z),&\operatorname{Im}z>0,\\
\overline{f(\overline z)},&\operatorname{Im}z<0,
\end{cases}
\]
and fill in $\RR\setminus E$ by continuous extension.  Schwarz reflection \cite[p.~172, Theorem~24]{Ahl79} shows that $F$ is holomorphic on $\CC\setminus E$.

Fix $x_v\in E$, with $W(x_v)=v\in V$, and choose a disk about $x_v$ containing no other point of $E$.  Reflection across the two punctured real intervals makes $F$ holomorphic on the full punctured disk.  Let $A$ be as in Lemma~\ref{lem:cusp}.  Boundary continuity and \eqref{eq:cusp-limit} imply
\[
\operatorname{Im}A(W(z))\longrightarrow+\infty
\qquad(z\to x_v,\ z\in\HH).
\]
Apply \eqref{eq:anharmonic} to $A^{-1}$ and put $\zeta=A(W(z))$.  Then
\[
f(z)=\lambda(W(z))
=R_{A^{-1}}\bigl(\lambda(A(W(z)))\bigr).
\]
Equation~\eqref{eq:lambda-cusp} yields, in the spherical metric,
\[
f(z)\longrightarrow a_v:=R_{A^{-1}}(0)\in\{0,1,\infty\}.
\]
The same spherical limit holds in the reflected lower half-disk.  If $a_v=0$ or $1$, the singularity is removable.  If $a_v=\infty$, then $|F(z)|\to\infty$ as $z\to x_v$ through the punctured disk.  Thus $1/F$ is holomorphic there, tends to zero, and has a removable singularity.  Since $1/F$ is not identically zero, its zero has finite order, and $F$ has a pole of finite order at $x_v$.

Since $E$ is locally finite, filling in all its points produces a meromorphic function on $\CC$.  It is nonconstant because $\lambda\circ W$ is nonconstant in $\HH$.  Both open half-planes omit $0$, $1$, and $\infty$, so every preimage of these values is real.
\end{proof}

The normalization at the boundary point $\infty$ is not cosmetic.  We record its role explicitly.

\begin{rem}\label{rem:infinity-normalization}
The condition $W(\infty)=\infty$ ensures that the preimages of vertices escaping to infinity cannot accumulate at a finite point of $\RR$.  Without it, the reflected function need not be meromorphic at such an accumulation point.
\end{rem}

\section{Proof of the main theorem}\label{sec:main-proof}

We now combine the reflection theorem with the Green-energy obstruction.

\begin{proof}[Proof of Theorem~\ref{thm:main}]
Let $\Omega$ be the domain constructed in Sections~4--5, let $W$ be the map in Proposition~\ref{prop:reflection}, and set
\[
f=\lambda\circ W\qquad\text{in }\HH.
\]
Proposition~\ref{prop:reflection} supplies a nonconstant meromorphic extension $F$ to $\CC$ and shows that $f$ omits $0$, $1$, and $\infty$ in $\HH$.

Green functions and spherical area are conformally invariant.  Since $W(i)=i$, the change of variables $w=W(z)$ gives
\[
\int_\HH G_\HH(i,z)\bigl(f^{\#}(z)\bigr)^2\dd A(z)
=\int_\Omega G_\Omega(i,w)\bigl(\lambda^{\#}(w)\bigr)^2\dd A(w)
=\infty
\]
by \eqref{eq:omega-infinite-energy}.  Proposition~\ref{prop:energy-obstruction} implies $f\notin N(\HH)$, completing the proof.
\end{proof}

\section{Further remarks}\label{sec:further-remarks}

We use the standard Nevanlinna notation
\[
\rho(F)=\limsup_{r\to\infty}\frac{\log T(r,F)}{\log r},
\qquad
\delta(a,F)=1-\limsup_{r\to\infty}\frac{N(r,a;F)}{T(r,F)}.
\]

Theorem~\ref{thm:main} does not conflict with Nevanlinna's classical positive theorem.  Nevanlinna \cite{Nev25} proved that a finite-order meromorphic function on $\CC$ which omits three values in a half-plane is of bounded type there.  See also the modern discussion in \cite[Section~2.1]{EKS26}.  Consequently, the function constructed above has infinite order.

Nor does Edrei's theorem on radially distributed values rule out the example.  Specialized to derivative order $l=0$ and to the two rays with arguments $0$ and $\pi$, \cite[Theorem~1]{Edr55} says that if all zeros, one-points, and poles of a meromorphic function lie on these rays and
\begin{equation}\label{eq:deficiency-positive}
\delta(0,F)+\delta(1,F)+\delta(\infty,F)>0,
\end{equation}
then $F$ has finite order.  More precisely, both complementary angular gaps are $\pi$, so Edrei's bound is
\[
\rho(F)\leq\max\left\{\frac\pi\pi,\frac\pi\pi\right\}=1.
\]
The positive-deficiency hypothesis is part of the original theorem and is reproduced explicitly in \cite[Theorem~A]{WZ14}.  

If \eqref{eq:deficiency-positive} held for our function, Edrei's theorem would make its order finite, and Nevanlinna's theorem would then put $F|_\HH$ in $N(\HH)$, a contradiction.  Since deficiencies are nonnegative, the construction necessarily satisfies
\[
\delta(0,F)=\delta(1,F)=\delta(\infty,F)=0.
\]

Finally, we have neither used nor proved that the complete set of singular values of the reflected function is exactly $\{0,1,\infty\}$.  That stronger assertion would require a separate analysis of possible asymptotic values at infinity.

%\section*{Declaration of competing interest}
%The author declares no competing interests.
%
%\section*{Data availability}
%No data was used for the research described in this article.
%
%\section*{Acknowledgments}
%Teng Zhang is supported by the China Scholarship Council, the Young Elite Scientists Sponsorship Program for PhD Students (China Association for Science and Technology), and the Fundamental Research Funds for the Central Universities at Xi'an Jiaotong University (Grant No.~xzy022024045).

\end{document}